\newcommand{\toc}{\tableofcontents}
\theoremstyle{plain}
\newtheorem{theorem}{Theorem}[section]
\newtheorem*{theorem*}{Theorem}
\newtheorem{corollary}[theorem]{Corollary}
\newtheorem*{corollary*}{Corollary}
\newtheorem{cor}[theorem]{Corollary}
\newtheorem{lemma}[theorem]{Lemma}
\newtheorem{proposition}[theorem]{Proposition}
\theoremstyle{definition}
\newtheorem{remark}[theorem]{Remark}
\newtheorem{definition}[theorem]{Definition}
\newtheorem*{definition*}{Definition}
\newcommand{\e}{\varepsilon}
\newcommand{\IR}{\mathbb{R}}
\newcommand{\CI}{\mathbb{C}}
\renewcommand{\Re}{\operatorname{Re}}
\DeclareMathOperator{\ssi}{\Leftrightarrow}
\DeclareMathOperator{\impl}{\Rightarrow}
\newcommand{\conv}{\operatorname{conv}}
\newcommand{\ts}{\textsection}
\newcommand{\ip}[1]{\langle#1\rangle} 
\newcommand{\norm}[1]{\|#1\|} 
\newcommand{\abs}[1]{|#1|}
\newcommand{\St}{\mathop{\mathrm{St}}}
\newcommand{\hh}{\mathcal{H}}
\title{Separation theorems for bounded convex sets of bounded operators}
\author{Mika\"el Pichot}
\author{Erik S\'eguin}
\address{Mika\"el Pichot, McGill University, 805 Sherbrooke St W., Montr\'eal, QC H3A 0B9, Canada}\email{mikael.pichot@mcgill.ca}
\address{Erik S\'eguin, University of Waterloo,
200 University Avenue West,
Waterloo, Ontario, N2L 3G1, Canada}\email{e2seguin@uwaterloo.ca}
\begin{document}

\maketitle

\begin{abstract}  
We establish new metric characterizations for the norm (respectively, ultraweak) closure of the convex hull of a bounded set in an arbitrary $C^*$-algebra (respectively, von Neumann algebra), and provide applications of these results to the majorization theory.  
\end{abstract}

\section{Introduction}

Let $M$ be a $\sigma$-finite von Neumann algebra, let $x\in M$, and let $X\subset M$ be a bounded set. 
 We prove:

\begin{theorem}\label{T - intro} The following  conditions are equivalent:
\begin{enumerate}
\item $x\in \overline{\conv X}$, where $\conv X$ denotes the convex hull of $X$ and the closure is with respect to the ultraweak topology;
\item for every  $y\in M$, there exists $z\in X
$ such that 
\[
\|x-z\|_2\leq \|y-z\|_2.
\]
\end{enumerate}
Here by $\|\cdot\|_2$ we mean the 2-norm associated with a faithful normal state. 
\end{theorem}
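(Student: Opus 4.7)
The plan is to realize everything inside the GNS Hilbert space $H$ of the faithful normal state $\varphi$ and reduce the problem to the Hilbert space projection theorem. Let $\Omega\in H$ be the cyclic and separating vector, so that $\|a\|_2 = \|a\Omega\|$ and the map $\iota\colon M\to H$, $a\mapsto a\Omega$, is injective.

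The preliminary fact I would need is that $\iota$ is continuous from the ultraweak topology on $M$ to the weak topology on $H$: for each $\eta\in H$, the functional $a\mapsto \langle a\Omega,\eta\rangle$ is a normal vector functional. Since $X$, hence $\conv X$, is operator-norm bounded, the set $K:=\overline{\conv X}^{\,uw}$ lies in a norm ball and is therefore ultraweakly compact by Banach--Alaoglu. Consequently $K\Omega\subset H$ is weakly compact, hence norm closed; it is of course convex.

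For (2)$\Rightarrow$(1) I would argue by contrapositive. If $x\notin K$ then $x\Omega\notin K\Omega$ by the separating property, so the Hilbert projection theorem yields a unique closest point $y\Omega\in K\Omega$, where $y\in K\subset M$. The orthogonality characterization $\Re\langle x\Omega-y\Omega,\,w\Omega-y\Omega\rangle\leq 0$ for $w\in K$ rearranges to
\[
\|x-w\|_2^2\;\geq\;\|x-y\|_2^2+\|y-w\|_2^2\;>\;\|y-w\|_2^2 \qquad (w\in K),
\]
using $x\Omega\neq y\Omega$. Specializing to $w=z\in X\subset K$ directly contradicts (2).

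For (1)$\Rightarrow$(2), again by contrapositive. If some $y\in M$ satisfies $\|x-z\|_2>\|y-z\|_2$ for all $z\in X$, the map
\[
f(a) := \|a-x\|_2^2 - \|a-y\|_2^2 = 2\Re\langle a\Omega,\,(y-x)\Omega\rangle + \bigl(\|x\|_2^2-\|y\|_2^2\bigr)
\]
is real-affine and ultraweakly continuous by the preliminary observation. By hypothesis $f>0$ on $X$, hence on $\conv X$ by affinity, and $f\geq 0$ on $K$ by continuity. But $f(x)=-\|x-y\|_2^2\leq 0$, so if $x\in K$ then $\|x-y\|_2=0$, and by faithfulness $x=y$, contradicting $f>0$ on $X$ (the trivial case $X=\varnothing$ makes both conditions fail).

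The step I expect to require the most care is the norm closedness of $K\Omega$ in $H$, since this is exactly what lets the Hilbert projection argument return a point actually realized by an element $y\in M$. Both the boundedness hypothesis on $X$ (through Banach--Alaoglu) and the normality of $\varphi$ (through ultraweak-to-weak continuity of $\iota$) enter essentially here; the remainder is a routine manipulation of the inner-product identity for $\|\cdot\|_2$.
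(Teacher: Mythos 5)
Your proof is correct, but it follows a genuinely different route from the paper's. The paper deduces the theorem from Lemma~\ref{L - hyperplanes}, a Hilbert-space separation lemma in which membership in $\overline{\conv X}$ is tested against an arbitrary \emph{dense} set $\Omega$ (here $\Omega=M\subset L^2(M,\psi)$); the hard direction of that lemma is proved by a median-hyperplane argument, approximating the nearest point $\eta_0\in\overline{\conv X}$ by elements $\eta_n\in\Omega$ and showing that for large $n$ the median hyperplane of $[\xi,\eta_n]$ still separates --- this is where boundedness enters for the paper. You instead avoid any density or approximation step: by combining ultraweak compactness of $K=\overline{\conv X}^{\,uw}$ with ultraweak-to-weak continuity of $a\mapsto a\Omega$, you show that the nearest-point projection of $x\Omega$ onto $K\Omega$ is itself of the form $y\Omega$ with $y\in M$, so the separating test element lies in $M$ on the nose. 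This is precisely the ``projection closed'' phenomenon that the paper records only as a remark after Theorem~\ref{T - Majorization finite factors Hiai Nakamura} is set up (taking $Y=M$ in $\hh=L^2(M,\psi)$), and your argument also dispenses with the paper's step matching the $2$-norm and ultrastrong topologies on bounded sets, since you work with the ultraweak closure throughout. Your (1)$\Rightarrow$(2) direction, via the ultraweakly continuous real-affine functional $f(a)=\|a-x\|_2^2-\|a-y\|_2^2$, is likewise self-contained, whereas the paper imports that implication from \cite{tarskistable}. What you lose is generality: Lemma~\ref{L - hyperplanes} with an arbitrary dense test set is reused verbatim in the paper for the non-$\sigma$-finite and $C^*$-algebraic versions (Theorems~\ref{T - Separation general vN} and~\ref{T - Separation cs alg}), where the analogue of your compactness argument would have to be reworked in each seminormed quotient $\hh_F$; what you gain is a shorter, self-contained proof of this particular theorem resting only on the Hilbert projection theorem and Banach--Alaoglu.
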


This result provides a metric characterization for the ultraweakly closed convex hull of a bounded set in $M$. It can be viewed as a separation theorem for bounded convex sets and can be used in some cases as a substitute to the Hahn-Banach theorem. In the present paper, we are mainly interested in applications to the majorization theory. In the most classical case of matrix majorization, we prove the following. 
 
\begin{corollary}\label{C - matrix}  Let $A$ and $B$ be arbitrary $n\times n$ complex matrices. The following conditions are equivalent:
\begin{enumerate}
\item there exists unitary matrices $U_1,\ldots ,U_n$ and positive numbers $t_1, \ldots, t_n$ such that 
\[
\sum_{i=1}^n t_i=1,\qquad A=\sum_{i=1}^n t_i U_iBU_i^* ;
\]
\item
for every complex matrix $C$, there exists a unitary matrix $U$ such that 
\[
\|A-UBU^*\|_2\leq \|C-UBU^*\|_2.
\]
\end{enumerate} 
Here by $\|\cdot\|_2$ we mean the Frobenius norm.
\end{corollary}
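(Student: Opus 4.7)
The plan is to apply Theorem \ref{T - intro} in the finite-dimensional case $M=M_n(\CI)$, equipped with its normalized tracial state $\tau=\frac{1}{n}\tr$, and with bounded set
\[
X:=\{UBU^*:U\in U(n)\}\subset M_n(\CI),
\]
the unitary orbit of $B$. The 2-norm associated with $\tau$ is $n^{-1/2}\|\cdot\|_F$, so this common scalar factor cancels on both sides of the inequality in (2) and Theorem \ref{T - intro} may be applied equivalently with the Frobenius norm or with its normalized version.

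The set $X$ is compact, being the continuous image of the compact group $U(n)$ under $U\mapsto UBU^*$; in particular it is bounded. On the finite-dimensional algebra $M_n(\CI)$ the ultraweak, norm and $\|\cdot\|_2$ topologies all coincide, and by Carath\'eodory's theorem the convex hull of a compact subset of a finite-dimensional real vector space is again compact, hence closed. Consequently $\overline{\conv X}=\conv X$. Theorem \ref{T - intro} therefore yields
\[
A\in\conv X\ \Longleftrightarrow\ \forall C\in M_n(\CI),\ \exists U\in U(n):\ \|A-UBU^*\|_2\le\|C-UBU^*\|_2,
\]
which is precisely the equivalence (1)$\Leftrightarrow$(2): any element of $\conv X$ admits, by Carath\'eodory, a representation as a finite convex combination $\sum_i t_i U_i BU_i^*$, and conversely every such combination lies in $\conv X$.

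Since the substantive content is contained in Theorem \ref{T - intro}, the only genuine work in this corollary is the translation above. I do not anticipate any serious obstacle: the sole subtleties are the (harmless) normalization of the 2-norm and the reduction of the finite count of unitaries in (1) to the stated value $n$, which is a bookkeeping matter handled by Carath\'eodory's theorem applied within the affine hyperplane $\{X\in M_n(\CI):\tr X=\tr B\}$ in which $X$ is contained, together with standard majorization-type refinements.
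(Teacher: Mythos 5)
Your overall route is the same as the paper's: specialize Theorem \ref{T - intro} (equivalently, Theorem \ref{T - Separation sigma finite}/\ref{T - Majorization finite factors}) to $M=M_n(\CI)$ with the normalized trace, observe that the unitary orbit $X=\{UBU^*\}$ is compact, that in finite dimensions the ultraweak and norm topologies coincide and $\conv X$ is already closed, and that the normalization of the $2$-norm cancels. All of that is correct and is exactly the intended derivation.

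The genuine gap is in your last step, the passage from $A\in\conv X$ to the specific form of condition (1) with \emph{exactly $n$} unitaries. Carath\'eodory's theorem in the affine hyperplane $\{Y\in M_n(\CI):\tr Y=\tr B\}$, which has real dimension $2n^2-2$, yields a convex combination of at most $2n^2-1$ points of the orbit, not $n$; so your claim that the count $n$ is ``a bookkeeping matter handled by Carath\'eodory'' is quantitatively false for every $n\geq 2$, and the appeal to ``standard majorization-type refinements'' is not a proof for a non-Hermitian $B$. For Hermitian $B$ the bound $n$ is genuinely attainable, but by a different mechanism: Horn's theorem produces a single unitary $U$ with $A=E(UBU^*)$ for a suitable diagonal expectation $E$ (in a basis diagonalizing $A$), and the pinching identity $E(Y)=\frac{1}{n}\sum_{k=0}^{n-1}Z^kYZ^{*k}$ with $Z=\mathrm{diag}(1,\omega,\ldots,\omega^{n-1})$, $\omega=e^{2\pi i/n}$, exhibits $A$ as an average of exactly $n$ unitary conjugates. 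No such reduction is available off the shelf for arbitrary complex $B$. If condition (1) is read as ``some finite convex combination of unitary conjugates,'' your argument (with the Carath\'eodory count corrected to $2n^2-1$, or simply to ``finitely many'') is complete and matches the paper, which itself does not supply the reduction to $n$ terms; if the bound $n$ is to be taken literally for general $B$, an additional argument is required that neither you nor the paper provides.
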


We refer to \cite{Ando}, \cite{Ando2}, and \cite{Marshall} for  introductions to the majorization theory (see also \ts\ref{S - Majorization finite} below for the basic definitions). When $A$ and $B$ are Hermitian matrices, Conditions (1) and (2) are equivalent to the well-known majorization relation $A\prec B$ between  self-adjoint matrices (see Def. \ref{D - majorization}).  More general majorization results, for arbitrary elements in von Neumann algebras and $C^*$-algebras, will be established in this paper.

The proof of Theorem \ref{T - intro} relies on a separation lemma for bounded sets in Hilbert spaces (Lemma \ref{L - hyperplanes}).  Although we are mainly discussing majorization  in the present paper, we mention that in \cite{tarskistable}, a related separation lemma (for general, not necessarily bounded sets) was used as an intermediate step in the proof of an Ulam-type stability characterization of amenability for groups, in terms of positive definite maps with values in $B(\hh)$.  

In \ts\ref{S - maj vN} we generalize Theorem \ref{T - intro} to arbitrary von Neumann algebras, and use this generalization to prove (in \ts\ref{S - maj C*}) the following separation result for the norm closed convex hull of a bounded set in an arbitrary $C^*$-algebra.  

\begin{theorem}\label{T - majo cs} 
Let $A$ be a unital $C^*$-algebra, ${X\subset A}$ be a bounded set, and ${x\in A}$. The following are equivalent: 
\begin{enumerate} 
\item ${x\in\overline{\conv X}}$, where the closure is with respect to the norm topology;
\item for every state ${\psi\in\St(A)}$ and ${y\in A}$, there exists ${z\in X}$ such that 
\[\|{x-z}\|_{2,\psi}\leq\|{y-z}\|_{2,\psi}.\]
\end{enumerate} 
\end{theorem}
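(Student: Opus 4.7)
The plan is to reduce Theorem~\ref{T - majo cs} to the von Neumann algebra version of the separation theorem (the generalization of Theorem~\ref{T - intro} to arbitrary von Neumann algebras that the authors establish in \ts\ref{S - maj vN}), using GNS representations for the easy direction and the universal enveloping von Neumann algebra $A^{**}$ for the nontrivial direction.

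For $(1) \Rightarrow (2)$, given a state $\psi$ on $A$, take its GNS representation $(\pi_\psi, \hh_\psi, \xi_\psi)$ and consider $M_\psi := \pi_\psi(A)''$ equipped with the normal vector state $\tilde\psi = \omega_{\xi_\psi}$, which satisfies $\tilde\psi \circ \pi_\psi = \psi$. Since $\pi_\psi$ is a $*$-homomorphism, the assumption that $x$ lies in the norm closure of $\conv X$ in $A$ forces $\pi_\psi(x)$ into the norm -- hence the ultraweak -- closure of $\conv \pi_\psi(X)$ in $M_\psi$. Applying the von Neumann algebra version to $\pi_\psi(X)$ together with the element $\pi_\psi(y)$ produces $z \in X$ with $\|\pi_\psi(x - z)\|_{2, \tilde\psi} \leq \|\pi_\psi(y - z)\|_{2, \tilde\psi}$, and the identity $\|\pi_\psi(a)\|_{2, \tilde\psi}^2 = \tilde\psi(\pi_\psi(a^*a)) = \psi(a^*a)$ delivers the inequality of $(2)$.

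For the main direction $(2) \Rightarrow (1)$, identify $A$ with its canonical image in $A^{**}$. By Mazur's theorem, the norm closure of $\conv X$ in $A$ coincides with its weak closure $\sigma(A, A^*)$, which is the restriction to $A$ of the ultraweak topology on $A^{**}$; it thus suffices to show that $x$ lies in the ultraweak closure of $\conv X$ in $A^{**}$. Let $X^\sigma \subset A^{**}$ denote the ultraweak closure of $X$, which is bounded and hence ultraweakly compact. Since $\overline{\conv X^\sigma}^{\text{uw}} = \overline{\conv X}^{\text{uw}}$, applying the von Neumann algebra version to $X^\sigma$ reduces the problem to producing, for each normal state $\psi$ on $A^{**}$ and each $y \in A^{**}$, some $\tilde z \in X^\sigma$ with $\|x - \tilde z\|_{2, \psi} \leq \|y - \tilde z\|_{2, \psi}$. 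Since $\psi$ restricts to a state on $A$ (and normal states on $A^{**}$ are precisely such restrictions), Kaplansky's density theorem provides a bounded net $(y_\alpha) \subset A$ with $y_\alpha \to y$ strongly, and normality of $\psi$ then gives $\|y_\alpha - y\|_{2, \psi} \to 0$. Condition $(2)$ yields $z_\alpha \in X$ with $\|x - z_\alpha\|_{2, \psi} \leq \|y_\alpha - z_\alpha\|_{2, \psi}$, and passing to a subnet we may assume $z_\alpha \to \tilde z \in X^\sigma$ ultraweakly.

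The main obstacle is then the passage to the limit: the $z_\alpha$ converge only ultraweakly, while $\|\cdot\|_{2, \psi}$ is merely ultraweakly lower semicontinuous, so the 2-norm inequality cannot be passed to the limit naively. The remedy is to expand the squared inequality as
\[
\|x\|_{2,\psi}^2 - \|y_\alpha\|_{2,\psi}^2 + 2 \Re\, \psi\bigl( (y_\alpha - x)^* z_\alpha \bigr) \leq 0,
\]
in which the first two terms converge because $y_\alpha \to y$ in the 2-norm, and the cross-term converges upon writing $(y_\alpha - x)^* z_\alpha = (y - x)^* z_\alpha + (y_\alpha - y)^* z_\alpha$: the first summand has ultraweakly continuous $\psi$-expectation (as $\psi$ is normal), and the second is controlled by $\|y_\alpha - y\|_{2, \psi}\,\|z_\alpha\|_{2, \psi} \to 0$ via Cauchy--Schwarz. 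Rearranging the limit inequality then gives $\|x - \tilde z\|_{2, \psi} \leq \|y - \tilde z\|_{2, \psi}$, as required.
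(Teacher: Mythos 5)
Your proposal is correct and follows essentially the same route as the paper: the hard direction $(2)\Rightarrow(1)$ passes to $A^{**}$, uses Kaplansky density to approximate $y$ by a bounded net in $A$, extracts an ultraweak limit point of the resulting $z_\alpha$, and circumvents the mere lower semicontinuity of $\|\cdot\|_{2,\psi}$ by expanding the squared inequality so that the cross term converges (normality of $\psi$ plus Cauchy--Schwarz), before invoking the von Neumann algebra separation theorem and Mazur's theorem. The only cosmetic difference is that you prove $(1)\Rightarrow(2)$ via the GNS representation of each state rather than by passing to the double dual, which is an equivalent reformulation.
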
 

The majorization theory in  von Neumann algebras (see \cite{Kamei,Kamei2,Kamei3,Hiai,HiaiNakamura0, HiaiNakamura}) and $C^*$-algebras  (see \cite{S,NS,majocs,MS})  is well-studied.
 For self-adjoint elements in a $C^*$-algebra, Theorem 1.1 in \cite{majocs}
provides a general majorization criterion in terms of lower semi-continuous traces. At the cost of testing against all states on $A$, we obtain the following characterization of the norm closure of the convex hull of the unitary orbit of a non-necessarily self-adjoint operator in a general $C^{*}$-algebra. 

\begin{corollary}\label{T - majo cs intro}
Let $A$ be a unital $C^*$-algebra and let $x,y \in A$. The following are equivalent:
\begin{enumerate}
\item $x\in \overline{\conv\{ uyu^*: u\in U(A)\}}$, where $U(A)$ denotes the unitary group of $A$ and the closure is with respect to the norm topology; 
\item for every  state $\psi\in\St(A)$ and $z\in A$, there exists $u\in U(A)
$ such that 
\[
\|x-uyu^*\|_{2,\psi}\leq \|z-uyu^*\|_{2,\psi}.
\]
\end{enumerate}
\end{corollary}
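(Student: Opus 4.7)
The plan is to derive this as a direct application of Theorem \ref{T - majo cs}, specializing the bounded set $X$ to the unitary orbit of $y$.

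First I would set $X := \{uyu^* : u \in U(A)\}$ and verify the boundedness hypothesis of Theorem \ref{T - majo cs}. Since every $u \in U(A)$ satisfies $\|u\| = \|u^*\| = 1$, submultiplicativity of the $C^*$-norm yields $\|uyu^*\| \leq \|y\|$, so $X$ sits inside the closed ball of radius $\|y\|$ in $A$. This is the only hypothesis of Theorem \ref{T - majo cs} to check.

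With this $X$, condition (1) of Theorem \ref{T - majo cs} reads verbatim $x \in \overline{\conv\{uyu^* : u \in U(A)\}}$ in the norm topology, which is condition (1) of the corollary. Condition (2) of Theorem \ref{T - majo cs} asserts that for every state $\psi \in \St(A)$ and every element of $A$ (denoted $z$ in the corollary), there exists some element of $X$---necessarily of the form $uyu^*$ for some $u \in U(A)$---such that $\|x - uyu^*\|_{2,\psi} \leq \|z - uyu^*\|_{2,\psi}$. This is exactly condition (2) of the corollary.

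I do not expect a genuine obstacle: the whole content of the statement is carried by Theorem \ref{T - majo cs}, and the boundedness check is immediate. In particular, no self-adjointness or normality hypothesis on $y$ is needed, and no assumption on traces of $A$ (e.g.\ existence or faithfulness) enters, because the characterization is stated purely in terms of the family of GNS 2-seminorms $\|\cdot\|_{2,\psi}$ ranging over all states. The role of the corollary is mainly to record the form of the result most directly comparable with the unitary-orbit formulation used in the majorization literature.
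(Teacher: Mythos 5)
Your proposal is correct and coincides with the paper's own proof: the paper likewise applies Theorem \ref{T - Separation cs alg} (which is Theorem \ref{T - majo cs} of the introduction) to the bounded set $X=\{uyu^*:u\in U(A)\}$. The boundedness check $\|uyu^*\|\leq\|y\|$ is the only hypothesis to verify, exactly as you note.
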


We conclude this paper with some remarks on submajorization (\ts\ref{S - Remarks}).

\toc

\section{A lemma}

Let $\hh$ be a (real or complex) Hilbert space. 
We  begin with a separation lemma which provides a metric characterization of the norm closure of a bounded convex set in $\hh$.  We do not assume that $\hh$ is separable.

\begin{lemma}\label{L - hyperplanes} Let  $X\subset \hh$ be a bounded set, let $\xi\in \hh$ be a vector, and let $\Omega\subset\hh$ be a dense set.   The following are equivalent: 
\begin{enumerate}
\item $\xi\in\overline {\conv X}$, where the closure is with respect to the norm topology;
\item for every $\eta\in\Omega$, there exists $\zeta\in X$ such that
\[
\|\xi-\zeta\|\leq\|\eta-\zeta\|.
\]
\end{enumerate}
\end{lemma}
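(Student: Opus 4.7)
The plan is to rewrite the metric inequality $\|\xi-\zeta\|\leq\|\eta-\zeta\|$ in its polarized form
\[
2\Re\ip{\eta-\xi,\zeta}\;\leq\;\|\eta\|^{2}-\|\xi\|^{2},
\]
which is affine in $\zeta$ and so well suited to convex-combination arguments. In this reformulation, (2) says that for each $\eta\in\Omega$ the set $X$ is not contained in the open half-space of points strictly closer to $\eta$ than to $\xi$. No Hahn--Banach theorem is required beyond the nearest-point projection onto a closed convex set in $\hh$.

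For (1) $\Rightarrow$ (2), I would fix an arbitrary $\eta\in\hh$ (density of $\Omega$ is irrelevant here) and argue by contradiction. Assume the reverse strict inequality $2\Re\ip{\eta-\xi,\zeta}>\|\eta\|^{2}-\|\xi\|^{2}$ holds for every $\zeta\in X$. Since the left side is affine in $\zeta$, the strict inequality persists on $\conv X$. Because $\xi\in\overline{\conv X}$, approximating $\xi$ in norm by $\sigma\in\conv X$ and passing to the limit yields $2\Re\ip{\eta-\xi,\xi}\geq\|\eta\|^{2}-\|\xi\|^{2}$, which rearranges to $\|\eta-\xi\|^{2}\leq 0$ and forces $\eta=\xi$. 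But then $\|\xi-\zeta\|=\|\eta-\zeta\|$ for any $\zeta\in X$ (such $\zeta$ exists, as the case $X=\emptyset$ is trivial: both (1) and (2) then fail), contradicting the assumption.

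For (2) $\Rightarrow$ (1), I would argue the contrapositive. Assume $\xi\notin\overline{\conv X}$, let $p$ be the nearest-point projection of $\xi$ onto the closed convex set $\overline{\conv X}$, and set $v:=p-\xi\neq 0$. The standard projection inequality rewrites as $\Re\ip{v,w-\xi}\geq\|v\|^{2}$ for all $w\in\overline{\conv X}$. Fix $t\in(0,1)$ and set $\eta_{0}:=\xi+2tv$. A direct expansion, using $\|\eta_{0}\|^{2}=\|\xi\|^{2}+4t\Re\ip{\xi,v}+4t^{2}\|v\|^{2}$, yields
\[
\|\xi-w\|^{2}-\|\eta_{0}-w\|^{2}\;=\;4t\bigl(\Re\ip{v,w-\xi}-t\|v\|^{2}\bigr)\;\geq\;4t(1-t)\|v\|^{2}\;>\;0\qquad(w\in X).
\]
This would already contradict (2) with $\eta=\eta_{0}$, but we need $\eta\in\Omega$. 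Since $X$ is bounded, the map $\eta\mapsto\|\xi-w\|^{2}-\|\eta-w\|^{2}$ is continuous in $\eta$ uniformly over $w\in X$, so the strict inequality $\|\xi-w\|>\|\eta-w\|$ survives on some norm neighborhood of $\eta_{0}$. Density of $\Omega$ then supplies $\eta\in\Omega$ in this neighborhood, contradicting (2).

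The main obstacle lies in this last step: matching the hyperplane witness to the prescribed dense set $\Omega$. Boundedness of $X$ is exactly what ensures uniform continuity of the relevant quadratic expression in $\eta$, so that the strict separation provided by $\eta_{0}$ is preserved after moving $\eta_{0}$ by an arbitrarily small vector into $\Omega$; this is the step where the hypothesis on $X$ genuinely intervenes.
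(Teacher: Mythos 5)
Your proof is correct and follows essentially the same route as the paper's: project $\xi$ onto $\overline{\conv X}$, separate strictly by the median hyperplane of the segment, and use the boundedness of $X$ to push the separating witness into the dense set $\Omega$; your explicit inner-product computation with the uniform gap $4t(1-t)\|v\|^2$ is a cleaner, more quantitative rendering of the paper's geometric argument with the ball $B_0$. The only genuine addition is that you prove (1)$\Rightarrow$(2) directly via polarization, whereas the paper cites Lemma 2.2 of an earlier work for that implication.
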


\begin{proof} Lemma 2.2 in \cite{tarskistable} shows that (1) implies (2); we establish  the 
converse here. Suppose that ${\xi\notin {\overline {\conv X}} }$ and (2) holds.  

Let $\eta_0$ denote the orthogonal projection of $\xi$ onto ${\overline {\conv X}} $. Since $\Omega$ is dense, there exists a sequence  $(\eta_n)\subset \Omega$ such that $\eta_n\to \eta_0$.
 We may assume that  $\eta_n\neq \xi$ for every $n$. In this case, the median (real) hyperplane $M_n$ for $[\xi,\eta_n]$ (thus, $M_n$ consists of all vectors equidistant to $\xi$ and $\eta_n$) divides $\hh$ into two disjoint open half-spaces. We call $H_n$ the open half-space associated with $M_n$ which contains $\eta_n$.

Since $\eta_0$ is the orthogonal projection of $\xi$ onto ${\overline {\conv X}} $, the latter set is included in $H_0$. We claim that if ${\overline {\conv X}} $ is a bounded set, it must be included in $H_n$ for every $n$ sufficiently large. 

Note that there exists a ball $B_0$ of center $[\xi,\eta_0]\cap M_0$ in $M_0$ such that ${\overline {\conv X}} $ is included in $B_0\times \ell$, where $\ell$ denotes the span of $\xi-\eta_0$. For every $\e>0$ and every $n$ sufficiently large, $M_n\cap B_0\times \ell$ is included in the $\e$-neighbourhood of $B_0$. We let $\e<\frac 1 4 \|\xi-\eta_0\|$, and find an $N$ such that for every $n\geq N$, ${\overline {\conv X}} \subset H_n$.  

In particular, there exists $n\geq 0$ such that 
\[
\norm{\eta_n-\zeta}<\norm{\xi-\zeta}
\]
 for every $\zeta\in {\overline {\conv X}} $.
 
Since $\eta_n\in \Omega$, we obtain for ${\zeta\in X}$ as in (2)  
\[
\norm{\xi-\zeta}\leq\norm{\eta_n-\zeta}< \norm{\xi-\zeta}
\]
which is a contradiction.
\end{proof}

\begin{remark}
It is clear from the proof that the second condition may be replaced with
\begin{enumerate}
\item[(2')] the closure of the set 
\[
\Omega_\xi:=\{\eta\in\Omega :\ \exists \zeta\in X : \norm{\xi-\zeta}\leq\norm{\eta-\zeta}\}
\]
contains the boundary of $\overline{\conv X}$.
\end{enumerate}
The equivalence between (1) and (2) under the condition that $\Omega=\hh$ in (2) is a particular case of  \cite[Lemma 2.2]{tarskistable}.  In particular, it follows, in the setting of Lemma \ref{L - hyperplanes},  that $\Omega_\xi$ is dense in $\hh$ if and only if $\Omega_\xi = \Omega$.
\end{remark}

We next show that the assumption that $X$ is bounded in Lemma \ref{L - hyperplanes} is  important. The following counterexample shows the result fails in general for unbounded convex sets. 

\begin{proposition} The following conditions are equivalent:
\begin{enumerate}
\item there exists a non-empty closed convex set $X\subset \hh$, a dense set $\Omega\subset \hh$, and an element $\xi\in  \hh$ for which the equivalence in Lemma \ref{L - hyperplanes} fails;
\item  $\dim_\IR \hh \geq 2$.
\end{enumerate}
\end{proposition}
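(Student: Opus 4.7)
I would prove the two directions separately.

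\emph{$(2) \Rightarrow (1)$.} The plan is to build an explicit counterexample inside a two-dimensional real subspace of $\hh$. Choose vectors $e_1, e_2 \in \hh$ with $\|e_1\|=\|e_2\|=1$ and $\Re\ip{e_1, e_2}=0$; such vectors exist whenever $\dim_\IR\hh\geq 2$ (in the complex case, take $e_1 = u$ and $e_2 = iu$ for any unit vector $u$). Set
\[
X := \{c e_1 : c \in \IR\}, \qquad \xi := e_2, \qquad \Omega := \{\eta \in \hh : \Re\ip{\eta, e_1} \neq 0\}.
\]
Then $X$ is a non-empty closed convex set equal to $\overline{\conv X}$, the vector $\xi$ does not lie in $X$, and $\Omega$ is dense as the complement of the real hyperplane $\ker\Re\ip{\cdot, e_1}$. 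For $\zeta = c e_1 \in X$ and $\eta \in \Omega$, one computes
\[
\|\xi - \zeta\|^2 - \|\eta - \zeta\|^2 = 1 - \|\eta\|^2 + 2c\,\Re\ip{\eta, e_1},
\]
and a suitable choice of real $c$ makes the right-hand side nonpositive, since $\Re\ip{\eta, e_1}\neq 0$. Hence condition (2) of Lemma \ref{L - hyperplanes} holds, while condition (1) fails, witnessing the failure of the equivalence.

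\emph{$(1) \Rightarrow (2)$.} The plan is to argue by contraposition: if $\dim_\IR \hh \leq 1$, then the equivalence of Lemma \ref{L - hyperplanes} cannot fail, even for unbounded $X$. Since the implication $(1)\Rightarrow(2)$ in Lemma \ref{L - hyperplanes} never used boundedness, it suffices to check $(2) \Rightarrow (1)$. The case $\hh = \{0\}$ is trivial. When $\hh \cong \IR$, any non-empty closed convex $X\subset\IR$ is an interval; if $\xi \notin X$, then $\xi$ lies strictly above $\sup X$ or strictly below $\inf X$ (the possibilities $\sup X = +\infty$ or $\inf X = -\infty$ would force $\xi \in X$). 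By density, pick $\eta \in \Omega$ strictly between $\xi$ and the nearest endpoint of $X$; every $\zeta\in X$ then satisfies $|\xi-\zeta|>|\eta-\zeta|$, contradicting (2).

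The substantive step is the construction of the counterexample. The key observation is that in two or more real dimensions, the kernel of the single real functional $\Re\ip{\cdot, e_1}$ is a proper closed subspace whose complement is dense, which lets one avoid the single ``bad direction'' (the line through $\xi$ perpendicular to $X$) along which the metric inequality would otherwise fail. In dimension one this bad direction is all that is available, so density of $\Omega$ forces the equivalence to hold.
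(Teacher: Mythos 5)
Your proof is correct. The overall structure matches the paper's (handle $\dim_\IR\hh\leq 1$ by the contrapositive via the interval argument, then exhibit an explicit counterexample when $\dim_\IR\hh\geq 2$), but your counterexample is a genuinely different witness. The paper takes $X$ to be a closed half-space, $\xi$ a point off it, and $\Omega=\hh\setminus\IR\xi$ (the complement of a line), and argues geometrically that the median hyperplane of $[\xi,\eta]$ must cut the boundary hyperplane of $X$; the existence of the desired $\zeta$ is then only sketched. You instead take $X=\IR e_1$ (an unbounded line), $\xi=e_2\perp e_1$, and $\Omega$ the complement of the real hyperplane $\ker\Re\ip{\cdot,e_1}$, and the inequality is verified by a one-line expansion of $\|\xi-\zeta\|^2-\|\eta-\zeta\|^2$ with $\zeta=ce_1$ and $c$ chosen of the right sign and large magnitude. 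Both constructions exploit the same phenomenon --- removing from $\Omega$ the directions along which one cannot escape to infinity inside $X$ --- but your version is more explicitly computational and self-contained, at the mild cost of deleting a larger (codimension-one rather than codimension-$(n-1)$) set from $\hh$; both complements are dense, so nothing is lost. Your observation that the implication $(1)\Rightarrow(2)$ of Lemma \ref{L - hyperplanes} holds without boundedness (so that only $(2)\Rightarrow(1)$ needs checking in dimension one) is also correct and makes the contrapositive step cleaner than the paper's ``it is not difficult to prove'' remark.
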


\begin{proof}
We first prove no counterexample to Lemma \ref{L - hyperplanes} can exist in dimension one. Suppose $X$ is a non-empty subset of $\IR=B(\IR)$. Then $\conv X$ is an interval in $\IR$, and for every dense set $\Omega\subset \IR$,  $\xi \in {\overline {\conv X}} $ if and only if   $\forall\eta\in\Omega$, $\exists \zeta\in X$ such that
\[
|\xi-\zeta|\leq |\eta-\zeta|.
\]
Indeed, if $\xi\not\in {\overline {\conv X}} $, then by density we may choose an $\eta$ which lies in the non-empty open interval between $\xi$ and ${\overline {\conv X}} $. Then it is clear that for every $\zeta\in X$, $\eta$ is closer to $\zeta$ than $\xi$ is. Conversely, if $\xi\in {\overline {\conv X}} $ and $\eta$ is an element in $\Omega$, then regardless of the relative position of $\xi$ and $\eta$, it is not difficult to prove that there will always exist some $\zeta$ in $X$ which is closer (although not necessarily strictly) to $\xi$ than to $\eta$.

Suppose now that $\hh$ is a  Hilbert space of finite real dimension $n\geq 2$, which we shall identify with $\IR^n$ for convenience. 
We consider the closed convex set 
\[
X=\{(x_1,\ldots, x_n)\in \hh : x_1\leq 0\}
\] 
 let $\xi=(1,0,\ldots, 0)$, and let $\Omega:=\hh\setminus \IR\xi$. Clearly $\xi\not\in X$ and $\Omega$ is a dense set. We claim that Condition (2) in Lemma \ref{L - hyperplanes} holds. Indeed, let $\eta\in \Omega$. Then, since $\eta\not\in \IR\xi$, the median (real) hyperplane for $[\eta,\xi]$ intersects the boundary hyperplane 
 \[
 \{(x_1,\ldots, x_n)\in \hh : x_1= 0\}.
 \] 
 Therefore, there exists an element $\zeta\in X$ such that 
\[
\|\xi-\zeta\|<\|\eta-\zeta\|.
\] 
This proves that (2) holds.

A similar argument clearly works in infinite dimension.
\end{proof}

\section{Separation theorems and majorization}\label{S - Majorization finite}

In this section we let $M$ be a $\sigma$-finite von Neumann algebra and fix  a faithful normal state  $\psi$ on $M$. We write $\|x\|_2:=\sqrt{\psi(x^*x)}$ for the 2-norm associated with the state $\psi$, and view $M$ as a dense subset of $L^2(M,\psi)$. 
A direct application of Lemma \ref{L - hyperplanes} gives:

\begin{theorem}[Separation theorem in $\sigma$-finite von Neumann algebras]\label{T - Separation sigma finite}
 Let $X\subset M$ be a bounded set and ${x\in M}$. The following are equivalent:
\begin{enumerate}
\item $x\in \overline{\conv X}$, where the closure is with respect to the ultraweak topology;
\item for every $y\in M$, there exists $z\in X
$ such that 
\[
\|x-z\|_2\leq \|y-z\|_2.
\]
\end{enumerate}
\end{theorem}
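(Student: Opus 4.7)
The plan is to apply Lemma \ref{L - hyperplanes} directly in the Hilbert space $\hh:=L^2(M,\psi)$, with the natural dense subset $\Omega:=M$ (viewed inside $\hh$ via the GNS embedding $x\mapsto x\Omega_\psi$). Since $X$ is norm-bounded in $M$ and $\|z\|_2\leq\|z\|\cdot\|1\|_2$, the set $X$ is also bounded in $\hh$, so the hypotheses of Lemma \ref{L - hyperplanes} are satisfied. Taking $\xi:=x$, the lemma yields the equivalence between Condition (2) of the theorem and the statement
\[
x\in\overline{\conv X}^{\,\|\cdot\|_2},
\]
where the closure is taken in the Hilbert space $L^2(M,\psi)$.

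The remaining task is therefore to identify this $\|\cdot\|_2$-closure with the ultraweak closure $\overline{\conv X}$ appearing in Condition (1). First I would note that $\conv X$ is norm-bounded in $M$, hence ultraweakly relatively compact by the Banach--Alaoglu theorem applied to the predual $M_*$; in particular its ultraweak closure lies inside $M$. Next, I would invoke the two standard facts that on norm-bounded subsets of $M$, the topology induced by $\|\cdot\|_{2,\psi}$ coincides with the strong operator topology (this uses that $\psi$ is faithful and normal), and on norm-bounded convex subsets of $M$, the strong operator topology coincides with the ultraweak topology (by the Hahn--Banach theorem, since the two have the same continuous linear functionals up to convex closure).

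Combining these facts, the two closures agree on the bounded convex set $\conv X$, so (1) is equivalent to $x\in\overline{\conv X}^{\,\|\cdot\|_2}$, which completes the chain of equivalences. The main conceptual step, already done in Lemma \ref{L - hyperplanes}, is the hyperplane separation argument in Hilbert space; the present theorem is essentially a translation of that lemma into von Neumann algebra language, so no serious obstacle is expected beyond correctly matching the topologies on bounded sets.
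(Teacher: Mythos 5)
Your proposal is correct and follows essentially the same route as the paper: apply Lemma \ref{L - hyperplanes} in $L^2(M,\psi)$ with $\Omega=M$, then identify the $2$-norm closure with the ultraweak closure using the coincidence of these topologies on bounded (convex) subsets of $M$. The extra detail you supply about matching the strong/ultraweak closures of bounded convex sets is a correct unpacking of the paper's one-line appeal to the same fact.
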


\begin{proof}    Since $X$ is bounded and $M$ is dense in the $2$-norm topology, it follows by Lemma \ref{L - hyperplanes} that
\[
x\in \overline {\conv X}
\]
if and only if for every $y\in M$, there exists $z\in X$ such that
\[
\|x-z\|_2\leq \|y-z\|_2
\]
where the closure is relative to the $2$-norm. Since the $2$-norm topology coincides with the ultrastrong topology on bounded sets, this concludes the proof. 
\end{proof}

\begin{remark} It is not difficult to extend Theorem \ref{T - Separation sigma finite} to direct sums of $\sigma$-finite algebras. Namely, let $S$ be a set, let $M=\bigoplus_{\alpha\in S} M_\alpha$ be a direct sum of $\sigma$-finite von Neumann algebras, and for every 
$\alpha\in S$, let $\psi_\alpha$ be a faithful normal state on $M_\alpha$. If $X\subset M$ is a bounded set and $x \in M$, then the following are equivalent:
\begin{enumerate}
\item $x\in \overline{\conv\, X}$, where the closure is with respect to the ultraweak topology;
\item for every $\alpha_1,\ldots, \alpha_n\in S$ and $y\in M$, there exists $z\in X
$ such that  
\[
\sum_{i=1}^n\|x-z\|_{2, \psi_{\alpha_i}}^2\leq \sum_{i=1}^n\|y-z\|_{2, \psi_{\alpha_i}}^2.
\]
\end{enumerate}
We shall in fact establish a completely general statement, valid for all von Neumann algebras, in \ts\ref{S - maj vN} below. The particular case of direct sums of $\sigma$-finite algebras is interesting on its own, as it is sufficient for some applications, including to double duals of separable $C^*$-algebras (the fact that double duals of separable $C^*$-algebras are direct sums of $\sigma$-finite algebras is well-known to experts---for example, Elliott uses it in \cite[Lemma 3.4]{Elliott}; we refer to \cite{Sherman} for more details). 
\end{remark}

As mentioned in the introduction, Theorem \ref{T - Separation sigma finite} has applications in the majorization theory.
We first recall the following definition (see \cite{Kamei,Kamei2,Kamei3,Hiai,HiaiNakamura0,HiaiNakamura}). 

\begin{definition}\label{D - majorization}
Suppose $x,y\in M$ are self-adjoint, and $\psi$ is tracial and factorial. We say that \emph{$x$ is majorized by $y$}, and write $x\prec y$, if the following two conditions hold: 
\begin{itemize}
\item[(a)] $\psi(x)=\psi(y)$ 
\item[(b)] $\psi((x-r)_+)\leq \psi((y-r)_+)$ for all $r\in 
\IR$.
\end{itemize}
\end{definition} 

Here $(x-r)_+$ denotes the element
obtained from $x$ by functional calculus with the function $x \mapsto (x - r)_+ := \max(x - r, 0)$. 

 If $M$ is a finite factor, $x,y\in M$ are self-adjoint elements, and $\psi$ is tracial, then Condition (1) in Theorem \ref{T - HK} is  well-known to be equivalent to $x\prec y$,
by the results of Hiai and Nakamura:

\begin{theorem}[see Theorem 2.1 in \cite{HiaiNakamura}]\label{T - HK}
Suppose $x,y\in M$ are self-adjoint, and $\psi$ is tracial and factorial.  The following are equivalent:
\begin{enumerate}
\item $x\in \overline{\conv\{ uyu^*: u\in U(M)\}}$, where the closure is with respect to the ultraweak topology;
\item $x\prec y$.
\end{enumerate}
\end{theorem}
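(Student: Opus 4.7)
The plan is to treat the two implications separately, with (1) $\Rightarrow$ (2) handled by convexity and ultraweak semi-continuity considerations, and (2) $\Rightarrow$ (1) reduced to Theorem \ref{T - Separation sigma finite} via the classical matrix majorization theorem combined with an approximation argument.

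For (1) $\Rightarrow$ (2), I would exploit two properties of $\psi$: ultraweak continuity (normality) and traciality. The identity $\psi(uyu^*)=\psi(y)$ passes to the ultraweak closure of the convex hull of the unitary orbit, yielding condition (a). For (b), the key observation is that the functional $\Phi_r(w):=\psi((w-r)_+)$ is convex and ultraweakly lower semi-continuous on the self-adjoint part of $M$. Convexity follows from the decomposition $(w-r)_+=\tfrac{1}{2}(|w-r|+(w-r))$ together with convexity of the norm $w\mapsto \|w-r\|_1=\psi(|w-r|)$; lower semi-continuity follows from normality of $\psi$. Since $\Phi_r(uyu^*)=\Phi_r(y)$ by traciality, convexity forces $\Phi_r\leq \Phi_r(y)$ on the convex hull, and lower semi-continuity extends this inequality to the ultraweak closure, giving (b).

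For (2) $\Rightarrow$ (1), I would apply Theorem \ref{T - Separation sigma finite} to the bounded set $X=\{uyu^*:u\in U(M)\}$. It then suffices to show that for every $z\in M$ there exists a unitary $u$ with $\|x-uyu^*\|_2\leq \|z-uyu^*\|_2$. Writing $z=a+ib$ with $a,b$ self-adjoint, a short computation using traciality of $\psi$ gives $\|c-z\|_2^2=\|c-a\|_2^2+\|b\|_2^2$ for every self-adjoint $c$, which immediately reduces the problem to the case where $z$ itself is self-adjoint.

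The main obstacle is then producing, for an arbitrary self-adjoint $z$, a unitary $u$ witnessing $\|x-uyu^*\|_2\leq \|z-uyu^*\|_2$. In the matrix case $M=M_n(\CI)$, this follows from the classical result that $x\prec y$ implies $x$ is already an honest convex combination $x=\sum_i t_i U_i y U_i^*$, which a fortiori satisfies the separation condition. For a general $\IIi$ factor, one approximates $x$, $y$, and $z$ in the 2-norm by elements of a finite-dimensional $*$-subalgebra on which the matrix case applies, and then passes to an ultraweak limit. Executing this approximation rigorously is the technical core of the Hiai--Nakamura argument in \cite{HiaiNakamura} and is where the main work lies.
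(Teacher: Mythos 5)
The paper offers no proof of this statement at all --- it is quoted directly from Hiai and Nakamura (Theorem 2.1 of \cite{HiaiNakamura}) --- so there is no internal argument to compare against, and your proposal must be judged as a self-contained proof. Your argument for (1) $\Rightarrow$ (2) is correct and complete: $\psi$ is affine, ultraweakly continuous, and constant on the unitary orbit by traciality, which gives (a); and $\Phi_r(w)=\psi((w-r)_+)=\tfrac12\psi(|w-r|)+\tfrac12(\psi(w)-r)$ is convex (the trace $1$-norm is a norm) and ultraweakly lower semi-continuous (since $\psi(|v|)=\sup\{|\psi(av)|:\|a\|\leq1\}$ is a supremum of normal functionals), while $(uyu^*-r)_+=u(y-r)_+u^*$ makes it constant on the orbit, which gives (b). The reduction of (2) $\Rightarrow$ (1) to self-adjoint $z$ via $\|c-z\|_2^2=\|c-a\|_2^2+\|b\|_2^2$ is also correct.

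The genuine gap is the rest of (2) $\Rightarrow$ (1). The unitary witness you propose to feed into Theorem \ref{T - Separation sigma finite} is manufactured by first proving $x\in\overline{\conv\{uyu^*:u\in U(M)\}}$ (via the classical matrix theorem, then approximation) and only then invoking the easy direction of the separation lemma; but once you know $x$ lies in that closed convex hull you already have (1), so the detour through the separation theorem does no work and the entire content of the implication is the approximation step, which you do not carry out. Moreover, the step as literally sketched would fail: a general $\mathrm{II}_1$ factor is not hyperfinite, so one cannot approximate $x$, $y$, and $z$ simultaneously by elements of a single finite-dimensional $*$-subalgebra. The actual argument of Kamei and Hiai--Nakamura instead approximates $x$ and $y$ separately by elements with finite spectrum whose spectral projections have equal traces, uses Murray--von Neumann equivalence of projections in a factor to conjugate these into a common matrix subalgebra, and tracks the error in the majorization relation throughout; none of this is present in your proposal. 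Deferring to \cite{HiaiNakamura} is what the paper itself does, so the citation is acceptable in context, but as a proof the implication (2) $\Rightarrow$ (1) is not established.
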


Our next result provides a metric characterization of the ultraweak closure of the convex hull  of the unitary orbit of an arbitrary operator in an arbitrary $\sigma$-finite von Neumann algebra $M$. (In fact, the assumption that $M$ is $\sigma$-finite can be dropped, see \ts\ref{S - maj vN}.)  Condition (2) can be interpreted as a  majorization condition  $x\prec y$ for arbitrary elements  $x,y$ in $M$. 

\begin{theorem}\label{T - Majorization finite factors}
 Let $x,y \in M$. The following are equivalent:
\begin{enumerate}
\item $x\in \overline{\conv\{ uyu^*: u\in U(M)\}}$, where the closure is with respect to the ultraweak topology;
\item for every  $z\in M$, there exists $u\in U(M)
$ such that 
\[
\|x-uyu^*\|_2\leq \|z-uyu^*\|_2.
\]
\end{enumerate}
\end{theorem}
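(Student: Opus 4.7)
The plan is to obtain Theorem \ref{T - Majorization finite factors} as a direct specialization of the separation theorem (Theorem \ref{T - Separation sigma finite}) to the particular bounded set
\[
X := \{uyu^* : u\in U(M)\}\subset M.
\]
The first step is to verify that $X$ is indeed bounded in $M$: for every $u\in U(M)$, one has $\|uyu^*\|=\|y\|$, so $X$ is contained in the operator-norm ball of radius $\|y\|$. Hence Theorem \ref{T - Separation sigma finite} applies to this $X$.

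Next, I would observe that substituting $X=\{uyu^*:u\in U(M)\}$ into conditions (1) and (2) of Theorem \ref{T - Separation sigma finite} reproduces verbatim conditions (1) and (2) of Theorem \ref{T - Majorization finite factors}. For (1), this is immediate from the definition of $X$. For (2), an element $z\in X$ in the separation theorem is precisely an operator of the form $uyu^*$ with $u\in U(M)$, and the $2$-norm inequality $\|x-z\|_2\leq \|y'-z\|_2$ from Theorem \ref{T - Separation sigma finite} (with the ambient vector called $y'$ to avoid name conflict) becomes
\[
\|x-uyu^*\|_2\leq \|y'-uyu^*\|_2,
\]
which, after renaming $y'$ to $z$, is exactly condition (2) of our theorem.

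There is no substantive obstacle here: the theorem follows formally from the separation theorem once one observes that the unitary orbit of $y$ is a bounded subset of $M$. The content of the result is therefore entirely contained in Theorem \ref{T - Separation sigma finite} (and ultimately in the Hilbert space separation lemma, Lemma \ref{L - hyperplanes}); the present statement merely repackages it as a majorization-type criterion for the ultraweak closure of the convex hull of the unitary orbit. For the same reason, the proof will be short: essentially one line of setup and one appeal to Theorem \ref{T - Separation sigma finite}.
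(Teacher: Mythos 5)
Your proposal is correct and matches the paper's own proof: both simply apply Theorem \ref{T - Separation sigma finite} to the bounded set $X=\{uyu^*:u\in U(M)\}$, with the boundedness of the unitary orbit (via $\|uyu^*\|=\|y\|$) being the only point to check.
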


In the finite dimensional case, Theorem \ref{T - Majorization finite factors} provides a tracial majorization criterion  for  general (not necessarily Hermitian) $n\times n$  real or complex matrices (Cor.\ \ref{C - matrix}). To the best of our knowledge, this criterion is new even for matrix algebras.

\begin{proof}[Proof of Theorem \ref{T - Majorization finite factors}]
Consider the bounded set
\[
X = \{uyu^*:u\in U(M)\}
\] 
It follows by Theorem \ref{T - Separation sigma finite} that
\[
x\in \overline {\conv\{uyu^* : u\in U(M)\}}
\]
if and only if for every $z\in M$, there exists $u\in U(M)$ such that
\[
\|x-uyu^*\|_2\leq \|z-uyu^*\|_2.\qedhere
\]
\end{proof}

We remark that a direct application of Lemma 2.2 in \cite{tarskistable} (as opposed to Lemma \ref{L - hyperplanes} in the present paper) would only establish an equivalence between the following two conditions:
\begin{enumerate}
\item $x\in \overline{\conv\{ uyu^*: u\in U(M)\}}$, where the closure is with respect to the ultraweak topology; and,
\item[(2')] for every  $z\in L^2(M,\psi)$, there exists $u\in U(M)
$ such that 
\[
\|x-uyu^*\|_2\leq \|z-uyu^*\|_2.
\]
\end{enumerate}
This equivalence already implies the proposed characterization for matrices, since  $L^2(M,\psi)=M$ if $M=M_n(\CI)$. 

It also follows that (2) in Theorem \ref{T - Majorization finite factors} is equivalent to (2') for every $\sigma$-finite von Neumann algebra.

On the other hand, the full generality of Lemma \ref{L - hyperplanes} is not required to establish
Theorem \ref{T - Majorization finite factors}.   This is because a $\sigma$-finite von Neumann algebra $M$ is ``projection closed''  in the following sense. 

\begin{definition}
Let $\hh$ be a Hilbert space. A pair $X\subset Y$ of subsets of $\hh$ is \emph{projection closed} if for every $y\in Y$, the orthogonal projection of $y$ onto the closure of the convex hull of $X$ belongs to $Y$.
\end{definition}

For example, if $Y$ is a finite dimensional subspace of $\hh$, then the pair ${X\subset Y}$ is projection closed for every subset ${X\subset Y}$. Furthermore, it is easy to check that Lemma \ref{L - hyperplanes} may be replaced by the following result (setting $Y=M\subset \hh=L^2(M,\psi)$), which is a direct modification of Lemma 2.2 in \cite{tarskistable}, in the proof of Theorem \ref{T - Separation sigma finite}.

\begin{lemma} Let ${X\subset Y}$ be a projection closed pair in $\hh$ and let ${\xi\in Y}$ be a vector. The following are equivalent: 
\begin{enumerate}
\item $\xi\in \overline{\conv X}$, where the closure is with respect to the norm topology; 
\item for every $\eta\in Y$, there exists $\zeta\in X$ such that 
\[\|\xi-\zeta\|\leq\|\eta-\zeta\|.\]
\end{enumerate}
\end{lemma}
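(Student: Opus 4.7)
The strategy is to mirror the proof of Lemma \ref{L - hyperplanes} while using the projection closedness hypothesis to remove the need for an approximation step (and hence to dispense with the boundedness assumption on $X$). The implication $(1)\Rightarrow(2)$ is immediate from the first half of the proof of Lemma \ref{L - hyperplanes}, which is itself \cite[Lemma 2.2]{tarskistable} and requires no boundedness: that statement establishes the desired metric inequality for every $\eta \in \hh$, so in particular for every $\eta \in Y$.

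For $(2)\Rightarrow(1)$ I would argue by contrapositive. Assume $\xi \notin \overline{\conv X}$. Since $\overline{\conv X}$ is a nonempty closed convex subset of $\hh$, the Hilbert projection theorem supplies a unique $\eta_0 \in \overline{\conv X}$ minimizing $\|\xi-\cdot\|$, and $\eta_0 \neq \xi$. Projection closedness of $X\subset Y$ applied to $\xi \in Y$ gives $\eta_0 \in Y$, so $\eta_0$ is a legitimate test point for condition (2). The variational characterization of the projection onto a closed convex set asserts $\Re\langle \xi-\eta_0,\zeta-\eta_0\rangle \leq 0$ for every $\zeta \in \overline{\conv X}$; expanding yields
\[
\|\xi-\zeta\|^2 = \|\xi-\eta_0\|^2 + \|\eta_0-\zeta\|^2 - 2\Re\langle \xi-\eta_0,\zeta-\eta_0\rangle \geq \|\xi-\eta_0\|^2 + \|\eta_0-\zeta\|^2,
\]
and since $\|\xi-\eta_0\|>0$ this forces $\|\eta_0-\zeta\| < \|\xi-\zeta\|$ for every $\zeta \in X$. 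Applying (2) to $\eta = \eta_0$ then produces a $\zeta \in X$ with $\|\xi-\zeta\| \leq \|\eta_0-\zeta\|$, a contradiction.

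There is essentially no obstacle: projection closedness is the precise hypothesis that lets one use the true projection $\eta_0$ as a test point in (2), rather than an element of a dense set approximating it. Consequently the two-step argument of Lemma \ref{L - hyperplanes}, in which boundedness of $X$ was needed to guarantee that the median hyperplanes $M_n$ of nearby approximations still separated $\overline{\conv X}$ from $\xi$, is replaced by a single direct application of the Hilbert projection theorem.
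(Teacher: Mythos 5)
Your proof is correct and is essentially the argument the paper intends: the paper's proof simply defers to Lemma 2.2 of \cite{tarskistable} (the case $Y=\hh$), whose separation step is exactly the Hilbert projection theorem plus the variational inequality you write out, and projection closedness is used precisely where you use it, to certify that $\eta_0$ is an admissible test point in (2).
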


\begin{proof}
The proof is identical to that of Lemma 2.2 in \cite{tarskistable}.
\end{proof}

Finally, we mention that for self-adjoint elements, Theorem \ref{T - Majorization finite factors} can be combined with the Hiai-Nakamura theorem  \cite[Theorem 6.4]{HiaiNakamura} that the ultraweak closure and the norm closure of the convex hulls of unitary orbits must coincide. This gives the following result.

\begin{theorem}\label{T - Majorization finite factors Hiai Nakamura}
	If $x,y \in M$ are self-adjoint elements, where $M$ is a $\sigma$-finite von Neumann algebra, then the following are equivalent:
	\begin{enumerate}
		\item[(1')] $x\in \overline{\conv\{ uyu^*: u\in U(M)\}}$, where the closure is with respect to the  operator norm topology; 
		\item[(2)] for every  $z\in M$, there exists $u\in U(M)
		$ such that 
		\[
		\|x-uyu^*\|_2\leq \|z-uyu^*\|_2.
		\]
	\end{enumerate}
\end{theorem}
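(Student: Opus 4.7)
The plan is to deduce this statement from Theorem \ref{T - Majorization finite factors} by upgrading the closure in condition (1) from the ultraweak topology to the operator-norm topology. The self-adjointness hypothesis enters only at this step, via the Hiai--Nakamura theorem that, for a self-adjoint $y$ in $M$, the ultraweak closure and the norm closure of the convex hull of the unitary orbit of $y$ coincide.

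Concretely, I would proceed in two steps. First, I would observe that
\[
\overline{\conv\{uyu^*:u\in U(M)\}}^{\,\|\cdot\|} \subset \overline{\conv\{uyu^*:u\in U(M)\}}^{\,\mathrm{uw}},
\]
which is automatic since the norm topology is finer than the ultraweak topology on $M$. For the reverse inclusion, I would appeal directly to \cite[Theorem 6.4]{HiaiNakamura}, which asserts the equality of these two closures in the self-adjoint case. Since $x$ is an element of $M$, membership of $x$ in either closure is the same condition, and so conditions (1) and (1') are equivalent.

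Having established the equivalence of (1) and (1'), I would then invoke Theorem \ref{T - Majorization finite factors} (applied to the self-adjoint pair $x,y$) to conclude that (1) is equivalent to (2). Chaining these two equivalences gives (1') $\Leftrightarrow$ (2), as required.

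The only real point requiring care---and the step I would check most carefully---is verifying that \cite[Theorem 6.4]{HiaiNakamura} applies at the stated level of generality (arbitrary $\sigma$-finite von Neumann algebras, rather than, say, finite factors only). Granting that the cited Hiai--Nakamura result covers this setting, the proof amounts to a direct two-step concatenation and introduces no further analytic obstacle.
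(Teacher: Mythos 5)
Your proposal is correct and matches the paper's own argument exactly: the paper also obtains this theorem by combining Theorem \ref{T - Majorization finite factors} with \cite[Theorem 6.4]{HiaiNakamura}, which gives the coincidence of the ultraweak and norm closures of $\conv\{uyu^*: u\in U(M)\}$ for self-adjoint $y$. No further comment is needed.
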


It seems to be an open question whether or not the ultraweak and operator norm closures of $\conv\{ uyu^*: u\in U(M)\}$ coincide in general (see \cite[p.\ 36]{HiaiNakamura}) for an arbitrary element $y$ in a $\sigma$-finite von Neumann algebra $M$.

\section{Separation theorem in general von Neumann algebras}\label{S - maj vN}

In this section we let $M$ be a general (not necessarily $\sigma$-finite) von Neumann algebra. If $A$ is a unital $C^*$-algebra and ${\psi\in \St(A)}$ is a state, then we let ${\ip{\cdot,\cdot}_{\psi}}$ denote the positive semi-definite sesquilinear form on $A$ defined by 
\[\ip{x,y}_{\psi}=\psi(y^*x)\]
and let ${\norm{\cdot}_{2,\psi}}$ denote the induced semi-norm. Furthermore, if ${F\subset\St(A)}$ is a finite set of states, then we let ${\ip{\cdot,\cdot}_F}$ denote the positive semi-definite sesquilinear form on $A$ defined by 
\[\ip{x,y}_F=\sum_{\psi\,\in\,F}\ip{x,y}_{\psi}\]
and let ${\norm{\cdot}_{2,F}}$ denote the induced semi-norm. 

\begin{theorem}[Separation theorem in general von Neumann algebras]\label{T - Separation general vN} 
Let ${X\subset M}$ be a bounded set and ${x\in M}$. The following are equivalent: 
\begin{enumerate} 
\item ${x\in\overline{\conv X}}$, where the closure is with respect to the ultraweak topology
\item for every normal state ${\psi\in\St_\sigma(M)}$ and ${y\in M}$, there exists ${z\in X}$ such that 
\[\|{x-z}\|_{2,\psi}\leq\|{y-z}\|_{2,\psi}\] 
\item there exists a separating set ${W\subset\St_\sigma(M)}$ of normal states such that for every finite subset ${F\subset W}$ and ${y\in M}$, there exists ${z\in X}$ such that 
\[\norm{x-z}_{2,F}\leq\norm{y-z}_{2,F}\]
\end{enumerate} 
\end{theorem}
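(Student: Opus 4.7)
The plan is to prove the chain $(1)\Rightarrow(2)\Rightarrow(3)\Rightarrow(1)$. The quickest implication is $(2)\Rightarrow(3)$: I would take $W=\St_\sigma(M)$, which is separating since normal states separate points in any von Neumann algebra, and for a finite $F=\{\phi_1,\dots,\phi_n\}\subset W$ apply $(2)$ to the normal state $\psi_F=\tfrac{1}{n}\sum_i\phi_i$, using that $n\|\cdot\|_{2,\psi_F}^2=\|\cdot\|_{2,F}^2$. For $(1)\Rightarrow(2)$, I would fix a normal state $\psi$ and pass to its GNS triple $(\pi_\psi,\hh_\psi,\xi_\psi)$. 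The GNS map $a\mapsto\pi_\psi(a)\xi_\psi$ carries the ultraweak topology on bounded sets of $M$ into the weak topology on $\hh_\psi$ (by normality of $\psi$ and ultraweak continuity of multiplication by a fixed element); from $(1)$ and Mazur's theorem, $\pi_\psi(x)\xi_\psi$ therefore lies in the norm closure of $\pi_\psi(\conv X)\xi_\psi$ in $\hh_\psi$. Lemma~\ref{L - hyperplanes}, applied to the bounded set $\pi_\psi(X)\xi_\psi$ and the dense set $\pi_\psi(M)\xi_\psi$, then produces the $z\in X$ demanded by $(2)$.

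The substantive direction is $(3)\Rightarrow(1)$. For each finite $F\subset W$ I would form the Hilbert space $\hh_F=\bigoplus_{\phi\in F}\hh_\phi$ together with the map $\iota_F:M\to\hh_F$, $\iota_F(a)=(\pi_\phi(a)\xi_\phi)_{\phi\in F}$, so that $\|\iota_F(a)\|=\|a\|_{2,F}$. Inside the closed subspace $\overline{\iota_F(M)}\subset\hh_F$, the bounded set $\iota_F(X)$ and the dense set $\iota_F(M)$ satisfy condition $(2)$ of Lemma~\ref{L - hyperplanes} — this is precisely hypothesis $(3)$ for $F$ — so the lemma yields $\iota_F(x)\in\overline{\conv\iota_F(X)}$. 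Hence for every $\e>0$ I may select $z_{F,\e}\in\conv X$ with $\|x-z_{F,\e}\|_{2,F}<\e$, and in particular $\|x-z_{F,\e}\|_{2,\phi}<\e$ for every $\phi\in F$.

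Ordering the pairs $(F,\e)$ by refinement of $F$ and decrease of $\e$ yields a bounded net $(z_{F,\e})$ in $\conv X$. Since $M=(M_*)^*$, the ultraweak closure $\overline{\conv X}$ is ultraweakly compact by Banach--Alaoglu, so a subnet converges ultraweakly to some $z^*\in\overline{\conv X}$. For any fixed $\phi\in W$, the subnet eventually has $\phi\in F$ and $\e$ arbitrarily small, giving $\|x-z_{F,\e}\|_{2,\phi}\to 0$; at the same time, ultraweak convergence together with normality of $\phi$ yields weak convergence $\pi_\phi(z_{F,\e})\xi_\phi\to\pi_\phi(z^*)\xi_\phi$ in $\hh_\phi$. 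Combining these forces $\phi((x-z^*)^*(x-z^*))=0$ for every $\phi\in W$, and the separating property of $W$ then compels $(x-z^*)^*(x-z^*)=0$, i.e.\ $x=z^*\in\overline{\conv X}$.

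The hard part will be $(3)\Rightarrow(1)$: Lemma~\ref{L - hyperplanes} only delivers approximations $z_{F,\e}$ of $x$ in the seminorms indexed by the finite set $F$, so one must use Banach--Alaoglu compactness inside $\overline{\conv X}$ to glue these finite $F$-approximations into a single ultraweak limit $z^*$, and then exploit the separating property of $W$ to promote the $\phi$-by-$\phi$ seminorm vanishing into an honest equality in $M$.
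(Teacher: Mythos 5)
Your proposal is correct and follows essentially the same route as the paper: both arguments reduce each implication to Lemma~\ref{L - hyperplanes} applied in the Hilbert space attached to a finite set $F$ of normal states (your $\bigoplus_{\phi\in F}\hh_\phi$ is the paper's completion $\hh_F$ of $M/N_F$), produce approximants $z_{F,\e}\in\conv X$ with $\|x-z_{F,\e}\|_{2,F}<\e$, and glue them along the directed set of pairs $(F,\e)$. The only point of divergence is the conclusion of $(3)\Rightarrow(1)$: the paper invokes the fact that a separating family of normal states induces the ultrastrong topology on bounded sets, so that the net $(z_{F,\e})$ itself converges ultrastrongly to $x$, whereas you extract an ultraweakly convergent subnet by compactness of $\overline{\conv X}$ and identify its limit with $x$ via uniqueness of limits in each $\hh_\phi$ plus the separating property; both closings are valid, and yours trades that standard topological fact for a Banach--Alaoglu argument.
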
 

\begin{proof} 
Suppose first that ${(2)}$ holds and let ${F\subset\St_\sigma(M)}$ be a finite set of normal states. Let ${\psi\in\St_\sigma(M)}$ be the normal state defined by 
\[\psi=\frac{1}{\abs{F}}\sum_{\varphi\,\in\,F}\varphi\]
Then ${\norm{\cdot}_{2,\psi}=\abs{F}^{-1/2}\,\norm{\cdot}_{2,F}}$, which  yields the implication ${(2)\Rightarrow(3)}$. Now suppose that ${(3)}$ holds. Let $\mathcal{F}$ be the collection of all finite subsets of $W$, let ${F\in\mathcal{F}}$ be a finite set of normal states, and let 
\[N_F=\{y\in M:\norm{y}_{2,F}=0\}\]
Let ${\hh_F}$ denote the completion of ${M/N_F}$ to a Hilbert space with respect to the inner product induced on the quotient space by ${\ip{\cdot,\cdot}_F}$. Let ${\pi_F:M\to\hh_F}$ be the canonical map; then ${(3)}$ implies that for every ${y\in M}$, there exists ${z\in X}$ such that 
\[\norm{\pi_F(x)-\pi_F(z)}\leq\norm{\pi_F(y)-\pi_F(z)}\]
As ${\pi_F(M)}$ is dense in ${\hh_F}$, it follows by Lemma \ref{L - hyperplanes} that 
\[\pi_F(x)\in\overline{\conv(\pi_F(X))}=\overline{\pi_F(\conv X)}\]
where the closure is with respect to the norm topology; thus for every ${F\in\mathcal{F}}$ and every ${\varepsilon>0}$ there exists ${z_{F,\varepsilon}\in\conv X}$ such that 
\[\norm{x-z_{F,\varepsilon}}_{2,F}=\norm{\pi_F(x)-\pi_F(z_{F,\varepsilon})}<\varepsilon\]
Define a direction on ${\mathcal{F}\times(0,\infty)}$ by letting ${(E,\delta)\leq(F,\varepsilon)}$ if ${E\subset F}$ and ${\varepsilon\leq\delta}$. Let ${\psi\in W}$ be a normal state and ${\varepsilon>0}$ be a positive real number; then 
\[(E,\delta)\geq(\{\psi\},\varepsilon)\implies\norm{x-z_{E,\delta}}_{2,\psi}\leq\norm{x-z_{E,\delta}}_{2,E}<\delta\leq\varepsilon\]
and so ${\norm{x-z_{E,\delta}}_{2,\psi}\to0}$ for all ${\psi\in W}$. As $W$ is separating, the topology induced by ${\{\norm{\cdot}_{2,\psi}:\psi\in W\}}$ agrees with the ultrastrong topology on bounded sets; it therefore follows that ${(z_{E,\delta})\to x}$ ultrastrongly, hence ultraweakly, which proves ${(3)\Rightarrow(1)}$. Now suppose that ${(1)}$ holds. Let ${\psi\in\St_\sigma(M)}$ be a normal state, let ${F=\{\psi\}}$, and let ${\hh_F}$ and ${\pi_F}$ be as above; then ${\pi_F(x)\in\overline{\conv(\pi_F(X))}}$, where the closure is with respect to the norm topology, and thus it follows by Lemma \ref{L - hyperplanes} that for every element ${y\in M}$, there exists ${z\in X}$ such that 
\[\|{x-z}\|_{2,\psi}=\norm{\pi_F(x)-\pi_F(z)}\leq\norm{\pi_F(y)-\pi_F(z)}=\|{y-z}\|_{2,\psi}\]
thereby proving the implication ${(1)\Rightarrow(2)}$. 
\end{proof}  


\begin{remark} 
It is natural to ask whether the conditions in Theorem \ref{T - Separation general vN} are equivalent to the following condition: 
\begin{enumerate} 
\setcounter{enumi}{3} 
\item there exists a separating set ${W\subset\St_\sigma(M)}$ of normal states such that for every ${\psi\in W}$ and ${y\in M}$, there exists ${z\in X}$ such that 
\[\|{x-z}\|_{2,\psi}\leq\|{y-z}\|_{2,\psi}\]
\end{enumerate} 
The following counterexample demonstrates that this is not so: let ${M=\mathbb{C}\oplus\mathbb{C}}$, let ${a=(1,0)}$, let ${b=(0,1)}$, let ${X=\{a,b\}}$, let ${x=(1,1)}$, and let ${\varphi,\psi\in\St_\sigma(M)}$ be the normal states defined by 
\[\varphi(y,z)=y,\qquad\qquad\psi(y,z)=z\]
Then ${W=\{\varphi,\psi\}}$ is a separating set of normal states and 
\[\norm{x-a}_{2,\varphi}=0,\qquad\qquad\norm{x-b}_{2,\psi}=0\]
which implies that ${(4)}$ holds. However, it is clear that ${x\notin\overline{\conv X}}$, and thus ${(1)}$ does not hold. 
\end{remark}

\section{Separation theorem in general $C^*$-algebras}\label{S - maj C*}

We use a standard double dual argument to deduce the following result from Theorem \ref{T - Separation general vN}.

\begin{theorem}\label{T - Separation cs alg} 
Let $A$ be a unital $C^*$-algebra, ${X\subset A}$ be a bounded set, and ${x\in A}$. The following are equivalent: 
\begin{enumerate} 
\item ${x\in\overline{\conv X}}$, where the closure is with respect to the norm topology
\item for every state ${\psi\in\St(A)}$ and ${y\in A}$, there exists ${z\in X}$ such that 
\[\|{x-z}\|_{2,\psi}\leq\|{y-z}\|_{2,\psi}\]
\end{enumerate} 
\end{theorem}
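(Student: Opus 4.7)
The plan is a double-dual reduction to Theorem~\ref{T - Separation general vN} applied to $M=A^{**}$. The implication (1)$\Rightarrow$(2) is the routine direction: given $\psi\in\St(A)$, I would note that $\|\cdot\|_{2,\psi}\leq\|\cdot\|$, pass to the GNS Hilbert space $\hh_\psi$, and apply Lemma~\ref{L - hyperplanes} there (with any dense $\Omega$), which directly yields the separating $z\in X$ for each $y\in A$.

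For (2)$\Rightarrow$(1), I would use the identification of $A^{**}$ as a von Neumann algebra with predual $A^*$, so that states on $A$ correspond bijectively, via normal extension, to normal states on $A^{**}$. For such a matched pair $(\psi,\bar\psi)$, the two GNS constructions produce the same Hilbert space $\hh_\psi=\hh_{\bar\psi}$, and both $\pi_\psi(A)\xi_\psi$ and $\pi_{\bar\psi}(A^{**})\xi_{\bar\psi}$ are dense in it. I would then apply Lemma~\ref{L - hyperplanes} twice in $\hh_\psi$, with these two choices of dense $\Omega$, to conclude that the hypothesis (2)---which a priori only gives the separation condition against $y\in A$---is in fact equivalent to the same condition against all $y\in A^{**}$. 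The latter is precisely condition~(2) of Theorem~\ref{T - Separation general vN} applied to $M=A^{**}$, hence yields $x\in\overline{\conv X}$ in the ultraweak topology of $A^{**}$.

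To finish, I would invoke $(A^{**})_*=A^*$: the ultraweak topology on $A^{**}$ restricts to the weak topology on $A$, and Hahn--Banach then forces the norm and ultraweak closures of $\conv X$ (computed in $A$ and $A^{**}$ respectively) to agree on $A$. Since $x\in A$, this gives $x\in\overline{\conv X}^{\|\cdot\|}$, as desired.

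The main obstacle I anticipate is the bridging step in the second paragraph. The hypothesis only controls $y\in A$, whereas Theorem~\ref{T - Separation general vN} a priori demands control over $y\in A^{**}$. This is resolved precisely by the fact that Lemma~\ref{L - hyperplanes} was formulated with a general dense subset $\Omega$, rather than the ambient Hilbert space: the same lemma, applied twice with two different dense $\Omega$'s in $\hh_\psi$, bridges the two conditions and makes the double-dual argument go through without having to approximate $y\in A^{**}$ by elements of $A$ by hand.
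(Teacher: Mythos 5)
Your proposal is correct, and in the direction $(2)\Rightarrow(1)$ it takes a genuinely different route from the paper's. The paper also reduces to Theorem \ref{T - Separation general vN} in $M=A^{**}$, but it verifies the hypothesis of that theorem by hand: given a normal state $\psi$ and $y\in A^{**}$, it invokes Kaplansky's density theorem to produce a bounded net $(y_\alpha)\subset A$ converging ultrastrongly to $y$, applies hypothesis (2) to each $y_\alpha$ to obtain $z_\alpha\in X$, extracts an ultraweakly convergent subnet $z_\beta\to z\in\overline{\conv(\iota(X))}$, and runs an inner-product estimate to show $\|\iota(x)-z\|_{2,\psi}\leq\|y-z\|_{2,\psi}$ (so the witness lands only in the ultraweakly closed convex hull, and Theorem \ref{T - Separation general vN} must then be applied to the set $\overline{\conv(\iota(X))}$ rather than to $\iota(X)$ itself). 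Your bridging step replaces all of this with a double application of Lemma \ref{L - hyperplanes} in the GNS space $\hh_\psi=\hh_{\bar\psi}$, using the two dense subsets $\pi_\psi(A)\xi_\psi$ and $\pi_{\bar\psi}(A^{**})\xi_{\bar\psi}$: since both conditions are equivalent to $\pi_\psi(x)\xi_\psi\in\overline{\conv(\pi_\psi(X)\xi_\psi)}$, the quantifier over $y\in A$ upgrades for free to $y\in A^{**}$, with the witness still in $X$. This is cleaner and is exactly the payoff of having formulated Lemma \ref{L - hyperplanes} with an arbitrary dense $\Omega$; note only that you have not entirely dispensed with Kaplansky density, since it (or at least the $\sigma$-strong density of the unit ball of $A$ in that of $A^{**}$, together with normality of $\bar\psi$) is what guarantees that $\pi_{\bar\psi}(A)\xi_{\bar\psi}$ is dense in $\hh_{\bar\psi}$, i.e.\ that the two GNS spaces coincide. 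Your direction $(1)\Rightarrow(2)$, argued directly in $\hh_\psi$ without passing to the double dual, is also fine and marginally more economical than the paper's. The concluding step (ultraweak topology of $A^{**}$ restricting to the weak topology of $A$, then Mazur/Hahn--Banach) matches the paper.
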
 

\begin{proof} 
Suppose first that ${(2)}$ holds.  Let ${M=A^{**}}$, let ${\iota:A\hookrightarrow M}$ be the canonical embedding, let ${\psi\in\St_\sigma(M)}$ be a normal state, and let ${y\in M}$ be an arbitrary element; then Kaplansky's density theorem implies that there exists a bounded net ${(y_\alpha)_{\alpha\,\in\,\mathcal{I}}}$ in $A$ such that ${(\iota(y_\alpha))\to y}$ ultrastrongly. For every ${\alpha\,\in\,\mathcal{I}}$, let ${z_\alpha\in X}$ be an element such that ${\norm{x-z_\alpha}_{2,\psi}\leq\norm{y_\alpha-z_\alpha}_{2,\psi}}$. As ${\iota(X)}$ is bounded, it follows by ultraweak compactness that ${(z_\alpha)}$ admits a subnet ${(z_\beta)_{\beta\,\in\,\mathcal{J}}}$ such that ${(\iota(z_\beta))\to z}$ ultraweakly for some ${z\in\overline{\conv(\iota(X))}}$, where the closure is with respect to the ultraweak topology. Let ${\kappa=\sup\{\norm{\iota(z_\gamma)}_{2,\psi}:\gamma\in\mathcal{J}\}}$; then 
\begin{align*} 
\abs{\ip{\iota(x-y_\beta),\iota(z_\beta)}_\psi-\ip{\iota(x)-y,z}_\psi}&=\abs{\ip{\iota(x)-y,\iota(z_\beta)-z}_\psi-\ip{\iota(y_\beta)-y,\iota(z_\beta)}_\psi}
\\&\leq\abs{\ip{\iota(x)-y,\iota(z_\beta)-z}_\psi}+\kappa\,\norm{\iota(y_\beta)-y}_{2,\psi}\to0 
\end{align*} 
whence ${\Re\,\ip{\iota(x)-\iota(y_\beta),\iota(z_\beta)}_\psi\to\Re\,\ip{\iota(x)-y,z}_\psi}$. This implies that 
\[\Re\,\ip{\iota(x)-y,z}_\psi\geq\textstyle\frac{1}{2}\,(\norm{\iota(x)}_{2,\psi}^2-\norm{y}_{2,\psi}^2)\]
and therefore 
\[\norm{\iota(x)-z}_{2,\psi}\leq\|{y-z}\|_{2,\psi}\]
It then follows by Proposition \ref{T - Separation general vN} that ${\iota(x)\in\overline{\conv(\iota(X))}}$, where the closure is with respect to the ultraweak topology, and thus ${x\in\overline{\conv X}}$, where the closure is with respect to the weak topology; the implication ${(2)\Rightarrow(1)}$ then follows by Mazur's theorem. The implication ${(1)\Rightarrow(2)}$ follows immediately by passing to the double dual and applying Proposition \ref{T - Separation general vN}. 
\end{proof} 

We obtain the following majorization result as an immediate consequence of the above theorem. 

\begin{cor}
Let $A$ be a unital $C^*$-algebra and let $x,y \in A$. The following are equivalent:
\begin{enumerate}
\item $x\in \overline{\conv\{ uyu^*: u\in U(A)\}}$, where the closure is with respect to the norm topology;
\item for every  state $\psi\in\St(A)$ and $z\in A$, there exists $u\in U(A)
$ such that 
\[
\|x-uyu^*\|_{2,\psi}\leq \|z-uyu^*\|_{2,\psi}.
\]
\end{enumerate}
\end{cor}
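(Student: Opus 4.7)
The plan is to deduce the corollary as an immediate specialization of Theorem \ref{T - Separation cs alg} applied to the unitary orbit $X := \{uyu^* : u \in U(A)\}$. First I would verify that $X$ is bounded, which is the only hypothesis of Theorem \ref{T - Separation cs alg} that requires checking: for every unitary $u \in U(A)$ one has $\|uyu^*\| = \|y\|$, so $X$ is contained in the closed ball of radius $\|y\|$ in $A$.

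Next I would observe that substituting this $X$ into Theorem \ref{T - Separation cs alg} yields the desired equivalence essentially verbatim. Condition (1) of the corollary is literally condition (1) of the theorem with this choice of $X$. For condition (2), an element of $X$ is by definition of the form $uyu^*$ for some $u \in U(A)$, and the dummy variable called $y$ in the theorem plays the role of the element called $z$ in the corollary (the symbol $y$ being reserved here for the fixed element whose orbit we are considering). Under this renaming, condition (2) of the theorem reads: for every state $\psi \in \St(A)$ and every $z \in A$, there exists $u \in U(A)$ such that $\|x - uyu^*\|_{2,\psi} \leq \|z - uyu^*\|_{2,\psi}$, which is exactly condition (2) of the corollary.

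There is essentially no obstacle: all the substantive work has been done in proving Theorem \ref{T - Separation cs alg}, which in turn rested on the double dual argument reducing the $C^*$-algebraic statement to the von Neumann algebra separation Theorem \ref{T - Separation general vN}, itself a consequence of the Hilbert space separation Lemma \ref{L - hyperplanes}. The present corollary is the $C^*$-algebraic analogue of Theorem \ref{T - Majorization finite factors}, and specializes, in the finite-dimensional case, to the matrix majorization characterization of Corollary \ref{C - matrix}.
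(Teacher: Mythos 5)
Your proposal is correct and follows exactly the paper's own proof: apply Theorem \ref{T - Separation cs alg} to the bounded set $X=\{uyu^*:u\in U(A)\}$. The additional remarks on boundedness (via $\|uyu^*\|=\|y\|$) and the renaming of the dummy variable are fine and merely make explicit what the paper leaves implicit.
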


\begin{proof}
It suffices to consider the bounded set 
\[X=\{uyu^*:u\in U(A)\}\]
and apply Theorem \ref{T - Separation cs alg}. 
\end{proof}

\section{Remarks on submajorization}\label{S - Remarks}

The following characterizations are variations on our previous results in the context of the submajorization theory (see \cite{Hiai,S,majocs}) which can be obtained by using the same techniques.

\begin{theorem}\label{T - Majorization finite factors 2}
Let $x,y \in M$. Consider the following conditions:
\begin{enumerate}
\item[(4)] $x\in \overline{\conv\{ uyu^*:  \|u\|\leq 1\}}$, where the closure is with respect to the ultraweak topology;
\item[(5)]for every normal state ${\psi\in\St_\sigma(M)}$ and every $z\in M$, there exists $u\in M
$ with $\|u\|\leq 1$ such that 
\[
\|x-uyu^*\|_{2,\psi}\leq \|z-uyu^*\|_{2,\psi};
\]
\item[(6)] $x\in \overline{\conv\{ uyv: u,v\in U(M)\}}$, where the closure is with respect to the ultraweak topology;
\item[(7)] for every normal state ${\psi\in\St_\sigma(M)}$ and every  $z\in M$, there exist $u,v\in U(M)
$ such that 
\[
\|x-uyv\|_{2,\psi}\leq \|z-uyv\|_{2,\psi}.
\]
\end{enumerate}
Then (4) $\ssi$ (5) $\impl$ (6) $\ssi$ (7).
\end{theorem}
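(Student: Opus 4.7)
The plan is to derive the equivalences (4) $\Leftrightarrow$ (5) and (6) $\Leftrightarrow$ (7) as direct applications of the general separation theorem (Theorem \ref{T - Separation general vN}), and then to establish the remaining implication (4) $\Rightarrow$ (6) via a set-theoretic inclusion argument based on the Russo-Dye theorem.

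For the two equivalences, I would consider the bounded subsets $X_1 := \{uyu^* : u \in M,\ \|u\| \leq 1\}$ and $X_2 := \{uyv : u,v \in U(M)\}$ of $M$ (both bounded in norm by $\|y\|$). Applying Theorem \ref{T - Separation general vN} to $X_1$ yields (4) $\Leftrightarrow$ (5), and applied to $X_2$ it yields (6) $\Leftrightarrow$ (7), directly through the (1) $\Leftrightarrow$ (2) clause of that theorem.

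For (4) $\Rightarrow$ (6), it suffices to prove the set-theoretic inclusion $X_1 \subseteq \overline{\conv X_2}^{\,\|\cdot\|}$, since this immediately gives $\overline{\conv X_1} \subseteq \overline{\conv X_2}$ in the ultraweak topology and hence (4) $\Rightarrow$ (6). To this end, I would invoke the Russo-Dye theorem: in any unital $C^*$-algebra, the closed unit ball coincides with the norm closure of the convex hull of the unitary group. Thus, for every $u \in M$ with $\|u\| \leq 1$, there exists a sequence $u_n = \sum_i t_i^{(n)} v_i^{(n)}$ of finite convex combinations of unitaries $v_i^{(n)} \in U(M)$ with $u_n \to u$ in norm. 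The identity
\[
u_n y u_n^* = \sum_{i,j} t_i^{(n)} t_j^{(n)}\, v_i^{(n)} y\, (v_j^{(n)})^*
\]
exhibits $u_n y u_n^*$ as a convex combination of elements of $X_2$, and the standard bilinear estimate
\[
\|u_n y u_n^* - u y u^*\| \leq \|u_n - u\|\,\|y\|\,\|u_n^*\| + \|u\|\,\|y\|\,\|u_n^* - u^*\|
\]
(using the uniform bound $\|u_n\| \leq 1$) gives $u_n y u_n^* \to u y u^*$ in norm, proving the inclusion.

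No serious obstacle is anticipated: the only non-trivial tool beyond Theorem \ref{T - Separation general vN} is the Russo-Dye approximation of contractions by convex combinations of unitaries, which holds in any unital $C^*$-algebra. The mildly subtle point is the bilinear approximation for $uyu^*$, but this is immediately controlled by the uniform norm bound on the $u_n$'s provided by the convex-combination representation.
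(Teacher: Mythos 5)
Your proposal is correct and takes essentially the same approach as the paper: both obtain (4) $\Leftrightarrow$ (5) and (6) $\Leftrightarrow$ (7) by applying Theorem \ref{T - Separation general vN} to the bounded sets $\{uyu^*:\|u\|\leq 1\}$ and $\{uyv:u,v\in U(M)\}$, and reduce (4) $\Rightarrow$ (6) to an inclusion of closed convex hulls via Russo--Dye. The only (immaterial) difference is that the paper uses the Kadison--Pedersen strengthening, which writes a strict contraction as an exact average of finitely many unitaries and so makes $uyu^*$ an exact convex combination of elements $u_iyu_j^*$, whereas you use the classical closure form of Russo--Dye together with a norm-limit and bilinear estimate; both arguments are valid.
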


\begin{proof}
The equivalences (4) $\ssi$ (5) and (6) $\ssi$ (7) are direct consequences  of  Theorem \ref{T - Separation general vN} and (4) $\impl$ (6) follows by the following remark: if  $M$ is a von Neumann algebra and $y\in M$, then
\[
\overline{\mathrm{conv}\{uyu^*: u\in U(M)\}}\subset  \overline{\mathrm{conv}\{uyu^*:\| u\|\leq1\}}\subset \overline{\mathrm{conv}\{uyv^*:u,v\in U(M)\}}
\]
where the closures are with respect to the ultraweak topology. Namely, it follows by the Kadison-Pedersen strengthening of the Russo-Dye theorem (see \cite[Theorem 1]{KadisonPerdersen}), that if $u\in M$ is an element such that $\|u\|<1-2n^{-1}$ for some integer $n$ greater than 2, then there exist $n$ unitary elements $u_1, \ldots, u_n$ in $M$ such that $u=n^{-1}(u_1+\cdots +u_n)$. This implies that:
\[
\mathrm{conv}\{uyu^*:\| u\|<1\}\subset \mathrm{conv}\{uyv^*:u,v\in U(M)\}.
\]
The inclusion $\overline{\mathrm{conv}\{uyu^*:\| u\|\leq1\}}\subset \overline{\mathrm{conv}\{uyv^*:u,v\in U(M)\}}$ follows. The first inclusion is obvious. 
\end{proof}

\begin{remark}
We note that the inclusions 
\[\overline{\mathrm{conv}\{uyu^*: u\in U(M)\}}\subset  \overline{\mathrm{conv}\{uyu^*:\| u\|\leq1\}}\subset \overline{\mathrm{conv}\{uyv^*:u,v\in U(M)\}}\]
are strict in general. Namely, if $x\in \overline{\mathrm{conv}\{uyu^*:\| u\|\leq1\}}$ and $0\leq t\leq 1$, then  $tx\in \overline{\mathrm{conv}\{uyu^*:\| u\|\leq1\}}$. Furthermore, when $y=1_M$,  the set $\overline{\mathrm{conv}\{uyv^*:u,v\in U(M)\}}$ is the closed unit ball of $M$, every element in $\overline{\mathrm{conv}\{uyu^*:\| u\|\leq1\}}$ is positive, and the set $\overline{\mathrm{conv}\{uyu^*: u\in U(M)\}}$ is reduced to ${\{1_M\}}$. In particular, the inclusions may be strict. 
\end{remark}

Analogous results also hold in the setting of  $C^*$-algebras by using the norm topology rather than the ultraweak topology.

\end{document}